\documentclass[12pt]{article}
\usepackage{latexsym,amsmath,amsthm,verbatim,ifthen,amssymb,graphicx,color,mathrsfs}
\usepackage[all]{xy}
\usepackage{color}
\SelectTips{cm}{}

\newtheorem{theorem}{Theorem}[section]

\newtheorem{corollary}[theorem]{Corollary}
 
 \newtheorem{proposition}[theorem]{Proposition}
 \theoremstyle{definition}
 \newtheorem{definition}[theorem]{Definition}
 \theoremstyle{remark}
 \newtheorem{remark}[theorem]{Remark}
 \newtheorem{example}[theorem]{Example}
 \numberwithin{equation}{section}

\newcommand{\lib}{\mathbb L}
\newcommand{\bq}{\mathbb Q}
\newcommand{\br}{\mathbb R}

  \def\Aut{\operatorname{Aut}}
\def\im{\operatorname{Im}}
\begin{document}
\title
{Higher order Whitehead products and $L_\infty$ structures  on the homology of a DGL}

\author{Francisco Belch\'\i, Urtzi Buijs,  Jos\'e M. Moreno-Fern\'andez\\ and Aniceto Murillo\footnote{The authors have been supported by the MINECO grant MTM2013-41762-P and  by the Junta de
Andaluc\'\i a grant FQM-213. The second author is also supported by the MINECO grant RYC-2014-16780. The fourth author is also supported by the Vicerrectorado de Investigaci\'on of the University of M\'alaga.\vskip 1pt 2010 Mathematics subject
classification: 17B55, 18G55, 55P62.\vskip
 1pt
 Key words and phrases: Higher Whitehead product. $L_\infty$-algebra. Rational homotopy theory.}}


\maketitle

\begin{abstract}
We detect higher order Whitehead products on the homology $H$ of a differential graded Lie algebra $L$ in terms of higher brackets in the transferred $L_\infty$ structure on $H$ via a given homotopy retraction of $L$ onto $H$.
\end{abstract}

\section{Introduction}

Topological higher order Whitehead products were introduced in \cite{Porter}: given simply connected spheres $S^{n_1},\dots,S^{n_k}$, denote by $W=S^{n_1}\vee\dots\vee S^{n_k}$ and $T=T(S^{n_1},\dots,S^{n_k})$ their wedge and fat wedge respectively. Then, there is an attaching map (in what follows we shall not distinguish a map from the homotopy class that it represents) $\omega\colon S^{N-1}\to T$ with $N=n_1+\cdots+n_k$, for which
$$
S^{n_1}\times\dots\times S^{n_k}=T\cup_\omega e^{N}.
$$
Given homotopy classes $x_j\in \pi_{n_j} (X)$, for $j=1,\dots,k$, consider the  induced map $g=(x_1,\dots,x_k)\colon W\to X$ and define
the $k$th {\em order Whitehead product set} $[x_1,\dots , x_k]_W\subset \pi_{N-1}(X)$ as the (possibly empty) set
$$\{{f\circ\omega} \ |\ f\colon T\to X\ \text{an extension of}\ g\}.$$
$$
\xymatrix{
&W\ar@{^{(}->}[d]\ar[r]^g&X\\S^{N-1}\ar[r]^\omega &T\ar@{-->}[ru]_f
}
$$
This homotopy invariant set is not only the Eckmann-Hilton dual of Massey products but
the identification of homotopy classes as higher Whitehead products has proven recently to be essential in different settings. For instance, they lie in fundamental results  used in \cite{gthe1,iki1} to explicitely exhibit the homotopy type of certain polyhedral products. Moreover, in toric topology, describing by means of higher and iterated Whitehead products  maps between polyhedral products induced by topological operations is an important question \cite{gthe2,iki2}. On the other hand, the cellular structure of well studied spaces (other than the fat wedge of spaces, of course) are described by higher Whitehead attachments. This is the case for the cellular decomposition $*\subset X^n\subset X^{2n}\subset\dots\subset X^{mn}$ in \cite{sal}   of the (ordered)  configuration space of $m$ particles on $\br^{n+1}$, $n\ge 2$.

Using the Quillen approach to rational homotopy theory \cite{Quillen} this construction has an explicit translation to the homotopy category of differential graded Lie algebras (DGL's henceforth).  Accordingly, given $L$ a DGL and classes $x_1,\dots, x_k\in H(L)$, the {\em higher order Whitehead bracket} set $[x_1, \dots , x_k]_W$\break$\subset H(L)$ is defined in a purely algebraic way \cite[\S V.2]{Tanre}, see next section for details.

On the other hand, it is well known that, given $L$ any DGL, there is a structure of minimal $L_\infty$-algebra on $H=H(L)$, unique up to $L_\infty$ isomorphism, for which $L$ and $H$ are quasi-isomorphic, as $L_\infty$-algebras. This structure is inherited from $L$ via the homotopy transfer theorem, see for instance \cite{LV}. For it, $H$ has to be presented as a linear homotopy retract of $L$ and the higher brackets $\{\ell_i\}_{ i\ge 2}$ on the $L_\infty$ structure depend, in general, on the chosen homotopy retraction.

Detecting whether a $k$th  bracket on the $L_\infty$ structure on $H$ produces a  Whitehead bracket of order $k$ becomes a good tool and  not only to treat rationally the above mentioned topological problems.  For instance, it is well known that a DGL $L$ is formal is there exists a inherited $L_\infty$ structure on $H$ as above for which $\ell_n=0$, $n\ge 3$. Moreover, a necessary condition for $L$ to be formal is that the zero class be a higher Whitehead bracket of any order.

In this paper, and given $L$ any DGL, our goal will be then to detect Whitehead brackets of  order $k$ as  $k$th brackets on the induced $L_\infty$ structure on $H$. The most general assertion in this direction that we obtain is based in \cite[Thm. 4.1]{all}: given $x\in[x_1, \dots , x_k]_W$, and up to a sign, $\ell_k(x_1,\dots, x_k)=x$ modulo brackets (of the $L_\infty$ structure) of order less than or equal to $k-1$, see Proposition \ref{elprime} for a precise statement.

 To be more accurate, and in high contrast with the Eckmann-Hilton situation concerning Massey products \cite[Theorem 3.1]{Palmieri}, extra conditions are needed. We define higher Whitehead  brackets adapted to a given homotopy retract and  prove (see Theorem \ref{main1}):
\begin{theorem} For any homotopy retract of $L$ adapted to a given $x\in[x_1,\dots,x_k]_W$, and up to a sign,
  $$
  \ell_k(x_1,\dots, x_k)=x.
  $$
  \end{theorem}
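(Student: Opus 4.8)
The plan is to start from the general statement of Proposition \ref{elprime} (itself based on \cite[Thm. 4.1]{all}), which tells us that for $x\in[x_1,\dots,x_k]_W$ we have, up to sign,
$$
\ell_k(x_1,\dots,x_k)=x+(\text{correction terms}),
$$
where the correction terms are built out of the lower brackets $\ell_2,\dots,\ell_{k-1}$ applied to the classes $x_1,\dots,x_k$ (and to outputs of such brackets). The whole point of the notion of a homotopy retract \emph{adapted} to $x$ is, I expect, precisely to force every one of these correction terms to vanish, so the bulk of the proof is to unwind the adaptedness hypotheses and check term-by-term that they kill the correction. So the first step is to recall the precise recursive formula for the transferred brackets $\ell_k$ coming from the homotopy transfer theorem (the sum over rooted trees / the Lie analogue of the tensor-tree formula from \cite{LV}), written in terms of the retraction data $(i,p,h)$ realizing $H$ as a linear homotopy retract of $L$.

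Second, I would isolate which summands in that tree formula can possibly contribute to $\ell_k(x_1,\dots,x_k)$ a term that is not already the ``straight'' term $p\circ(\text{bracket in }L)\circ i^{\otimes k}$. These are exactly the trees with at least one internal edge, i.e. those that feed a homotopy $h$ applied to a lower bracket of some of the $i(x_j)$'s back into another bracket. The adaptedness condition should be engineered so that, for the specific cycles $a_j\in L$ representing $x_j$ that one works with, the relevant elements $h\bigl(\text{(lower bracket of the }a_j\text{'s)}\bigr)$ either vanish or are chosen to be genuine chains witnessing the Whitehead bracket, so that when they are re-inserted and projected by $p$ the contribution is zero in $H$. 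Concretely I would (a) pick representatives and a lift of the attaching-map data compatible with the retraction; (b) show inductively on the number of internal edges that each tree summand other than the straight one maps to $0$ under $p$; (c) conclude that $\ell_k(x_1,\dots,x_k)$ equals the straight term, which by the definition of the Whitehead bracket set and Proposition \ref{elprime} is $\pm x$.

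Third, the sign: I would track the Koszul signs through the tree formula exactly as in \cite{LV}, compare with the sign convention used to define $[x_1,\dots,x_k]_W$ in \cite[\S V.2]{Tanre}, and absorb the discrepancy into the ``up to a sign'' of the statement; this is bookkeeping rather than substance.

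The main obstacle I anticipate is step (b): showing that \emph{every} tree with internal edges contributes zero, not just the ``obvious'' ones. A priori the transferred formula mixes brackets of all intermediate orders $2\le j\le k-1$ in nested ways, and adaptedness is presumably stated as a finite list of conditions on a chosen retraction relative to a chosen lift $f$ of $g\colon W\to X$ (in DGL terms, a chosen extension over the fat wedge model). Matching that finite list against the combinatorial variety of trees — and making sure the inductive re-insertion argument does not generate new, un-controlled terms — is where the real work lies; I would handle it by organizing the induction around the ``first'' internal edge adjacent to the root and using the Maurer–Cartan / twisting-cochain description of the fat wedge model to see that the adaptedness of the retract is inherited by all the sub-brackets that appear.
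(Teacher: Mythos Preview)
There is a genuine conceptual gap in your plan. In the transferred structure coming from a DGL, the bracket in $L$ is \emph{binary}, so for $k\ge 3$ there is no ``straight'' term $p\circ(\text{bracket in }L)\circ i^{\otimes k}$ at all: every tree in the formula $\ell_k=\sum_{T\in\mathscr T_k}\frac{q\circ\ell_T}{|\Aut T|}$ has $k-1$ internal vertices and $k-2$ internal edges labeled by $K$. Consequently your step (b), ``each tree summand other than the straight one maps to $0$ under $p$'', cannot be right; if it were, $\ell_k$ would vanish identically. The adaptedness hypothesis does \emph{not} kill tree contributions. What it does is force $K$ applied to the partial brackets to reproduce exactly the chosen extension data: from $\phi(V)\subset A$ one gets $K\partial\phi(u_{i_1\dots i_s})=\phi(u_{i_1\dots i_s})$ for every $s\ge 2$, so that the homotopy $K$ rebuilds $\phi$ step by step rather than annihilating anything.

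The paper's proof does not pass through Proposition~\ref{elprime}. It proves directly, by induction on $p$ with $2\le p\le k$, the chain-level identity
\[
\phi(\partial u_{i_1\dots i_p})\;=\;\epsilon\sum_{T\in\mathscr T_p}\frac{1}{|\Aut T|}\,\ell_T(x_{i_1},\dots,x_{i_p})
\]
in $L$ (not merely in $H$). The inductive step decomposes $\mathscr T_k$ according to the two subtrees at the root, $\mathscr T_k=\coprod_p\mathscr T_{p,k-p}$, and matches this against Tanr\'e's formula for $\partial u_{1\dots k}$ as a sum of brackets $[u_{\sigma(1)\dots\sigma(p)},u_{\sigma(p+1)\dots\sigma(k)}]$; adaptedness enters precisely to replace each $\phi(u_{\sigma(1)\dots\sigma(p)})$ by $K\bigl(\phi\partial u_{\sigma(1)\dots\sigma(p)}\bigr)$, which by induction is $K$ of the tree sum over $\mathscr T_p$. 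Applying $q$ to the case $p=k$ then gives $\epsilon\,\ell_k(x_1,\dots,x_k)=q\phi(\omega)=x$. Your instinct in the last paragraph (induction organized by the edge at the root, using the fat-wedge model) is pointing at the right mechanism, but the conclusion of that induction is that the trees \emph{sum to} $\phi(\omega)$, not that all but one of them vanish.
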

A similar assertion is obtained for any homotopy retract under the vanishing of brackets of length up to $k-2$. That is (see Theorem \ref{elsegundo}):
\begin{theorem}Let  $\ell_i=0$ for $i\le k-2$ with $k\ge 3$. Then, if $[x_1,\dots,x_k]_W$ is non empty, and also up to a sign,
$$
\ell_k(x_1,\dots, x_k)\in [x_1,\dots,x_k]_W.
$$
\end{theorem}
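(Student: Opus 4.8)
The plan is to derive this statement directly from Proposition \ref{elprime}. Since $[x_1,\dots,x_k]_W$ is non-empty, I would fix some $x$ in it. Proposition \ref{elprime}, which rests on \cite[Thm.~4.1]{all}, then says that, up to a sign, $\ell_k(x_1,\dots,x_k)$ equals $x$ modulo brackets of order at most $k-1$; more precisely, the discrepancy is a sum $D$ of multilinear bracket monomials built from the operations $\ell_2,\dots,\ell_{k-1}$ of the transferred $L_\infty$ structure, each monomial having the classes $x_1,\dots,x_k$ as its leaves, each appearing exactly once. So the whole problem reduces to showing $D=0$ under the present hypotheses; granting this, $\ell_k(x_1,\dots,x_k)=x$ up to a sign, hence lies in $[x_1,\dots,x_k]_W$.

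To see that $D$ vanishes I would argue monomial by monomial. A monomial of $D$ is encoded by a rooted tree with $k$ leaves labelled by $x_1,\dots,x_k$, whose internal vertices $v$ carry the operation $\ell_{m(v)}$, where $m(v)$ is the number of children of $v$; here $2\le m(v)\le k-1$, the lower bound because the transferred structure is minimal and the upper bound because the monomial has order at most $k-1$. If the tree has $r$ internal vertices, counting edges (which are exactly the child relations) gives $\sum_v m(v)=k+r-1$, so $\sum_v\bigl(m(v)-1\bigr)=k-1$. A single internal vertex would then force $m(v)=k$, which is excluded; hence $r\ge 2$. If moreover every internal vertex satisfied $m(v)\ge k-1$, we would get $k-1=\sum_v(m(v)-1)\ge r(k-2)\ge 2(k-2)$, forcing $k\le 3$. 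Therefore, when $k\ge 4$ some internal vertex has $2\le m(v)\le k-2$, while when $k=3$ the identity $\sum_v(m(v)-1)=2$ together with $r\ge 2$ forces $r=2$ and $m(v)=2$ for both vertices.

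I would then finish the argument as follows. If $k\ge 4$, each monomial of $D$ contains a factor $\ell_m$ with $2\le m\le k-2$, which is zero by hypothesis, so $D=0$. If $k=3$, each monomial of $D$ has the form $\ell_2(\ell_2(x_a,x_b),x_c)$ or $\ell_2(x_a,\ell_2(x_b,x_c))$ for a permutation $(a,b,c)$ of $(1,2,3)$; but non-emptiness of $[x_1,x_2,x_3]_W$ already forces the pairwise Whitehead brackets $[x_i,x_j]$ to vanish in $H$ (they are the obstructions to extending $(x_1,x_2,x_3)$ over the fat wedge), and since $\ell_2$ is the Whitehead bracket on $H$ this means $\ell_2(x_i,x_j)=0$; the inner $\ell_2$ then annihilates each monomial and again $D=0$. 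In both cases $\ell_k(x_1,\dots,x_k)=x$ up to a sign, which is what we want.

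The point I expect to require the most care is the first one: verifying that the correction term furnished by Proposition \ref{elprime} really has the shape claimed above, i.e.\ that its summands are multilinear bracket monomials whose leaf set is exactly $\{x_1,\dots,x_k\}$. This should follow from inspecting the proof of Proposition \ref{elprime} together with a degree comparison, since an operation $\ell_j$ with $j<k$ applied to a proper sub-collection of the $x_i$'s has the wrong degree to contribute to $\ell_k(x_1,\dots,x_k)$. After that, only the tracking of signs remains.
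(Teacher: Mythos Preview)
Your plan hinges on a claim that is stronger than what Proposition~\ref{elprime} actually delivers, and this is a genuine gap. In the proof of that proposition the correction term arises as $\sum_{i=2}^{k-1}h_i(\Phi_i)$, where $\Phi_i\in\Lambda^i sH$ is furnished by the spectral sequence argument from Allday's theorem. There is no reason whatsoever for $\Phi_i$ to be built out of the particular classes $sx_1,\dots,sx_k$: it is an arbitrary element of $\Lambda^i sH$ of the correct total degree, and $H$ may contain many other classes contributing to that degree. Hence $\Gamma$ is a sum of values $\ell_i(y_1,\dots,y_i)$ on \emph{arbitrary} inputs $y_j\in H$, not a sum of tree monomials whose leaves are exactly $x_1,\dots,x_k$. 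Your degree remark does not rescue this: it only rules out $\ell_j$ applied to a sub-collection of the $x_i$'s, not $\ell_j$ applied to unrelated homology classes. Under the hypothesis $\ell_i=0$ for $i\le k-2$ all you can extract from Proposition~\ref{elprime} is $\Gamma\in\im\ell_{k-1}$, and this is not known to lie inside the indeterminacy of $[x_1,\dots,x_k]_W$, so the argument stalls. The $k=3$ case has the same problem: $\Gamma=\ell_2(\Phi_2)$ with $\Phi_2\in\Lambda^2 sH$ arbitrary, and the vanishing of $\ell_2(x_i,x_j)$ says nothing about $\ell_2(y,z)$ for other $y,z\in H$.

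The paper circumvents this obstacle by a constructive route: it first uses the spectral sequence computation (exactly as you would) to deduce $\ell_{k-1}(x_{i_1},\dots,x_{i_{k-1}})=0$ on all $(k-1)$-subsets, and then \emph{builds by hand} an extension $\phi\colon(\lib(U),\partial)\to L$ adapted to the given homotopy retract, inductively forcing $\phi(u_{i_1\dots i_s})\in A$ via $\phi(u_{i_1\dots i_s})=K\partial(\text{something})$. The vanishing of $\ell_{k-1}$ on the sub-collections is precisely what makes the last inductive step go through. One then invokes Theorem~\ref{main1} for this adapted $\phi$, which identifies $\epsilon\,\ell_k(x_1,\dots,x_k)$ with the specific Whitehead element $\overline{\phi(\omega)}$. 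So the missing idea in your proposal is this construction of an adapted extension; merely controlling $\Gamma$ modulo images of lower $\ell_j$'s is not enough.
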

In particular, if $[x_1,x_2,x_3]_W\not=\emptyset$, then $\ell_3(x_1,x_2,x_3)\in [x_1,x_2,x_3]_W$. Also, $\ell_4(x_1,x_2,x_3,x_4)\in [x_1,x_2,x_3,x_4]_W$
as long as this is not the empty set and the homology of $L$ is abelian.

We finish with an example which shows that adapted retracts are needed and that the above are the best possible results in this direction even for reduced DGL's.
\section{Preliminaries}
 We  assume the reader is
familiar with  the basics of higher homotopy structures being~\cite{LV} an
excellent reference. We will also rely on some known results from rational homotopy theory
for which~\cite{FHT} is now a classic reference. With the aim of fixing notation we give some definitions
and sketch some results we will need. Throughout this paper we assume that $\mathbb{Q}$ is the base field.

A {\em graded Lie algebra} is a $\mathbb{Z}$-graded vector space $L=\oplus_{p\in \mathbb{Z}}L_p$ with a bilinear product
called the Lie bracket and denoted by $[\,\,, \,]$ verifying {\em graded antisymmetry}, $[x, y] = -(-1)^{|x||y|}
[y, x]$, and {\em graded Jacobi identity},
$$
(-1)^{|x||z|}\Bigl[x, [y, z]\Bigr]
+(-1)^{|y||x|}\Bigl[ y, [z, x]\Bigr]
+(-1)^{|z||y|}\Bigl[ z, [x, y]\Bigr]
= 0,
$$
where $|x|$ denotes the degree of $x$.

A {\em differential graded Lie algebra} (DGL henceforth) is a graded Lie algebra $L$ endowed with a linear
derivation $\partial $ of degree $-1$ such that $\partial ^2=0$. It is called free if $L$ is free as a Lie
algebra, $L = \mathbb{L}(V)$ for some graded vector space $V$.
We say that $L$ is a {\em reduced} DGL if $L_p=0$ for $p\leq 0$.

The {Quillen chain functor} associates to any differential graded Lie algebra $(L,\partial)$ the differential graded coalgebra, DGC henceforth, ${\mathcal C}(L)=(\Lambda sL,\delta)$ which is the cocommutative cofree coalgebra generated by the suspension on $L$ and whose differential is given by $\delta=\delta_1+\delta_2$,
$$
\delta_1(sx_1\wedge...\wedge sx_k)=-\sum_{i=1}^k (-1)^{n_i} sx_1\wedge...\wedge s\partial x_i\wedge...\wedge sx_k,
$$
$$
\delta_2(sx_1\wedge...\wedge sx_k)=-\sum_{i<j} (-1)^{n_{ij}+|x_i|}s[x_i,x_j]\wedge sx_1...\widehat{sx}_i...\widehat{sx}_j...\wedge sx_k .
$$
Here, $n_i=\sum_{j<i}|sx_j|$ and $n_{ij}$ is the sign given by the equality $sx_1\wedge...\wedge sx_k=(-1)^{n_{ij}}sx_i\wedge sx_j\wedge sx_1...\widehat{sx}_i...\widehat{sx}_j...\wedge sx_k $.

In \cite{Quillen}, D. Quillen constructed an equivalence
$$   \begin{array}{c}
        \text{ Simply connected}\\
        \text{ spaces}\end{array}   \xymatrix{ \ar@<0.75ex>[r]^{\lambda} &
\ar@<0.75ex>[l]^{\langle -\rangle }}\begin{array}{c}
        \text{ Reduced}\\
        \text{DGL's}\end{array}$$
between the  homotopy category of simply connected rational complexes and the homotopy category of reduced differential graded
 Lie algebras.
The reduced DGL $L$ is a   {\em model} of the simply connected complex $X$ if there is a sequence of DGL quasi-isomorphisms
\begin{equation*}
L\stackrel{\simeq }{\rightarrow}\cdots \stackrel{\simeq }{\leftarrow} \lambda X.
\end{equation*}
For any model one has $H(L)\cong\pi_*(\Omega X)\otimes\bq$. If $L=(\lib ( V),\partial)$ is free  we say that it is a {\em Quillen model} of $X$. For such a model one has $H(V,\partial_1)\cong s\widetilde H(X;\bq)$ where $\partial_1\colon V\to V$ denotes the linear part of $\partial$ and $s$ denotes the suspension operator which is defined for any graded vector space $W$ by $(sW)_p=W_{p-1}$

Next, we briefly recall from \cite[\S V]{Tanre} how to read the set of higher order Whitehead products of a simply connected complex $X$ in a given  Quillen model $L$. On the one hand, following the notation in the introduction, the map $g$ is modeled by
 $$
 \varphi\colon (\lib (u_1,\dots,u_k),0)\longrightarrow  L
$$
in which $|u_j|=n_j-1$ for each  $j=1,\dots, k$,   and the class $\overline{\varphi(u_j)}$ represents the element $x_j\in \pi_{n_j}(X)$.

On the other hand,
 arguing cellularly \cite[\S V.2]{Tanre}, the inclusion $W\hookrightarrow T$ is modeled by the DGL inclusion
$$
(\lib (u_1,\dots,u_k),0)\hookrightarrow (\lib(U),\partial)
$$
in which $|u_j|=n_j-1$, $j=1,\dots, k$,
$$
U=\langle  u_{i_1\dots  i_s}\rangle,\quad 1\leq i_1<\cdots <i_s\leq k,\quad s< k,\quad |u_{i_1\dots  i_s}|=n_{i_1}+\cdots +n_{i_s}-1,
$$
 and the differential is given by
$$
\partial u_{i_1\dots  i_s}=\sum_{p=1}^{s-1}\sum_{\sigma \in \widetilde S(p, s-p)}\varepsilon(\sigma)  \Bigr[ u_{i_{\sigma (1)}\dots i_{\sigma (p)}}, u_{i_{\sigma (p+1)}\dots i_{\sigma (s)}}\Bigr],
$$
where $ \widetilde S(p, s-p)$ denotes the set of shuffle  permutations $\sigma$ such that $\sigma(1)=1$, and $\varepsilon(\sigma)$ is given by the Koszul convention.

Moreover, a Quillen model for $S^{n_1}\times\dots\times S^{n_k}$ is obtained by attaching a single generator to $\lib(U)$ in the same way. That is:
$$
(\lib(U\oplus\langle u_{1\dots k}\rangle),\partial)
$$
with $|u_{i\dots k}|=N-1$ and
\begin{equation}\label{tan}
\partial  u_{1\dots k}=\sum_{p=1}^{k-1}\sum_{\sigma \in \widetilde S(p, k-p)}\varepsilon(\sigma)  \Bigr[ u_{i_{\sigma (1)}\dots i_{\sigma (p)}}, u_{i_{\sigma (p+1)}\dots i_{\sigma (k)}}\Bigr].
\end{equation}
We denote  $\partial u_{i_1\dots i_k}=\omega$ henceforth as it encodes the homotopy class $S^{N-1}\stackrel{\omega}{\to} T$. It follows that  there is a bijective set correspondence
of homology classes
$$[x_1, \dots , x_k]_W\cong\{\overline{\phi(\omega)} \ |\ \phi\colon (\lib(U),\partial)\to L\ \text{an extension of}\ \varphi\}.$$
\begin{equation}\label{Whiteheadlie}
\xymatrix{
(\lib (u_1,\dots,u_k),0)\ar@{^{(}->}[d]\ar[r]^(.7)\varphi&L\\(\lib(U),\partial)\ar@{-->}[ru]_\phi
}
\end{equation}
At a purely algebraic level, and for any DGL, the above subset of $H(L)$ defines the $k$th {\em order Whitehead bracket set} $\left[ x_1, \dots , x_k\right]_W$ of  given homology classes $x_1,\dots,x_k\in H(L)$, see \cite[Def. V.3(2)]{Tanre}.

\bigskip

From now on, and for simplicity in the notation, we will omit the  symbol $\otimes$ in any element of a tensor algebra.

An \emph{$L_{\infty}$-algebra} $(L,\{\ell_k\})$
is a graded vector space $L$ together with  linear maps $\ell_k\colon
L^{\otimes k}\to L$ of degree $k-2$,  for $k\ge 1$, satisfying the following two conditions:
\begin{itemize}
\item[{\rm (i)}] For any permutation $\sigma$ of $k$ elements,
$$
\ell_k(x_{\sigma(1)}\ldots x_{\sigma(k)})=\varepsilon_{\sigma}\varepsilon\ell_k(x_1\ldots x_k),
$$
where $\varepsilon_{\sigma}$ is the signature of the permutation and
$\varepsilon$ is the sign given by the Koszul convention.
\item[{\rm (ii)}] The \emph{generalized Jacobi identity} holds, that is,
$$
\sum_{i+j=n+1}\sum_{\sigma\in S(i, n-i)}\varepsilon_{\sigma}\varepsilon(-1)^{i(j-1)}
\ell_{n-i}(\ell_i(x_{\sigma(1)}\ldots x_{\sigma(i)}) x_{\sigma(i+1)}\ldots x_{\sigma(n)})=0,
$$
where $S(i,n-i)$ denotes the set of $(i, n-i)$ shuffles.
\end{itemize}
Each $L_\infty$ structure in $L$  corresponds with a  differential $\delta$ in the cofree graded cocommutative coalgebra $\Lambda^+ sL$
generated by the suspension of $L$. Indeed, every $\ell_k$ determines a degree $-1$ linear map
\begin{equation}\label{otroole}
h_k=(-1)^{\frac{k(k-1)}{2}}s\circ\ell_k\circ(s^{-1})^{\otimes k}\colon\Lambda^ksL\to sL,
\end{equation}
which extends to a coderivation
$$
\delta_k\colon \Lambda^+sL\longrightarrow\Lambda^+sL
$$
decreasing the word length by $k-1$, that is, $\delta_k(\Lambda^psL)\subset \Lambda^{p-k+1}sL$ for any $p$:
\begin{equation}\label{eles}
\delta_k(sx_1\wedge...\wedge sx_p)=\sum_{i_1<\dots<i_k}\varepsilon\, h_k(sx_{i_1}\wedge...\wedge sx_{i_k})\wedge sx_1\wedge...\widehat{ sx}_{i_1}...\widehat{sx}_{i_k}...\wedge sx_p.
\end{equation}
Every differential graded Lie algebra $(L, \partial)$ is an
$L_{\infty}$-algebra by setting $\ell_1=\partial$, $\ell_2=[\,\,,\,]$ and
$\ell_k=0$ for $k>2$. The corresponding DGC structure is precisely ${\mathcal C}(L)$.

An $L_{\infty}$-algebra $(L,\{\ell_k\})$ is called
\emph{minimal} if $\ell_1=0$.
An $L_{\infty}$-morphism between  $L$ and $L'$ is a DGC morphism
 $$
f\colon (\Lambda^+sL,\delta)\longrightarrow (\Lambda^+sL',\delta'),
$$
often denoted simply by  $f\colon L\to L'$, which is encoded by a system of skew-symmetric
linear maps $f^{(k)}\colon L^{\otimes k}\to L'$ of degree $1-k$, $k\ge 1$, satisfying an infinite sequence of equations involving the brackets
$\ell_k$ and $\ell'_k$ (see for instance~\cite{Kon03}).

 An $L_{\infty}$-morphism is a \emph{quasi-isomorphism} if $f^{(1)}\colon (L,\ell_1)\to(L',\ell_1')$ is a
quasi-isomorphism of complexes.

 Given $L$ a DGL, consider the following diagram
$$
\xymatrix{ \ar@(ul,dl)@<-5.6ex>[]_K  & (L,\partial )
\ar@<0.75ex>[r]^-q & (H,0) \ar@<0.75ex>[l]^-i }
$$
in which $H=H(L)$, $i$ is a quasi-isomorphism, $qi={\rm id}_H$ and $K$ is a chain homotopy between ${\rm id}_L$ and $iq$, i.e.,  ${\rm id}_L-iq=\partial K+K\partial $. We encode this data as $(L,i,q,K)$ and call it a {\em homotopy retract of $L$}.
In this setting,
the classical {\em Homotopy Transfer Theorem}  reads \cite{LV}:

\begin{theorem} \label{HTT}
There exists an $L_{\infty}$-algebra structure $\{\ell_k\}$ on $H$, unique up to isomorphism,
    and  $L_{\infty}$ quasi-isomorphisms
$$
\xymatrix{ (L,\partial )
\ar@<0.75ex>[r]^-Q & (H,\{\ell_k\}) \ar@<0.75ex>[l]^-I }
$$
such that $I^{(1)}=i$ and $Q^{(1)}=q$. In other words, there are DGC quasi-isomorphisms extending $i$ and $q$
$$
\xymatrix{ {\mathcal C}(L)
\ar@<0.75ex>[r]^-Q & (\Lambda sH,\delta ) \ar@<0.75ex>[l]^-I }
$$
which make $(\Lambda sH,\delta )$ a quasi-isomorphic retract of the Quillen chains on $L$.
The
transferred higher brackets are given by
\begin{equation}\label{formula}
\ell_k=\sum_{T\in \mathscr{T}_k}\frac{q\circ \ell_T}{|\Aut (T)|}.
\end{equation}
$\hfill\qed$
\end{theorem}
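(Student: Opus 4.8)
The plan is to deduce Theorem \ref{HTT} from the homological perturbation lemma, following \cite{LV}. The first step is to pass to cofree coalgebras: a DGL $(L,\partial)$ is precisely the codifferential $\delta_L=\delta_1+\delta_2$ on $\mathcal{C}(L)=\Lambda sL$ recalled above, where $\delta_1$ extends $\partial$ and $\delta_2$ extends the bracket, and, by definition, an $L_\infty$ structure on $H$ is exactly a codifferential $\delta$ on $\Lambda^+sH$, the brackets $\ell_k$ being recovered from $\delta$ through \eqref{otroole} and \eqref{eles}. So the statement reduces to transferring $\delta_L$ along the retract, to checking that the transferred object is again a \emph{codifferential} (equivalently, that the transferred comparison maps are \emph{coalgebra} morphisms), and finally to reading off the tree formula \eqref{formula}.

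Second, I would make the chain-level data $(L,i,q,K)$ into a \emph{special} deformation retract by the standard modification of $K$, so that $K^2=0$, $qK=0$ and $Ki=0$ (possible since $\bq$ is a field), and then upgrade it by the tensor trick to a special deformation retract of complexes from $(\Lambda sL,\delta_1)$ onto $(\Lambda sH,0)$, with structure maps the induced coalgebra morphisms $\widehat{i},\widehat{q}$ and a homotopy $\widehat{K}$ assembled from $i,q,K$ on each symmetric power. Now $\delta_2$ is a perturbation of $\delta_1$ that strictly lowers word length, so $1-\delta_2\widehat{K}$ is invertible on $\Lambda sL$ (its Neumann series is finite in each word length) and the perturbation lemma applies, producing a transferred codifferential
\[
\delta=\widehat{q}\,(1-\delta_2\widehat{K})^{-1}\delta_2\,\widehat{i}=\sum_{n\ge 0}\widehat{q}\,(\delta_2\widehat{K})^{n}\,\delta_2\,\widehat{i}
\]
on $\Lambda sH$, together with perturbed maps $I=(1-\widehat{K}\delta_2)^{-1}\widehat{i}$ and $Q=\widehat{q}\,(1-\delta_2\widehat{K})^{-1}$ forming a new deformation retract.

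Third, I would check the structural claims. Because the perturbation $\delta_2$ is a coderivation and $\widehat{i},\widehat{q}$ are coalgebra morphisms satisfying the side conditions, the coalgebra refinement of the perturbation lemma guarantees that $\delta$ is again a coderivation, i.e. an $L_\infty$ structure $\{\ell_k\}$ on $H$, and that $I,Q$ are $L_\infty$ morphisms; establishing this compatibility is the delicate point and I would single it out as the main obstacle. The transferred structure is minimal because the base of the contraction is $(\Lambda sH,0)$ while every term of the series above strictly lowers word length, so the component of $\delta$ sending $\Lambda^1sH$ to $\Lambda^1sH$ — that is, $\ell_1$ — vanishes; and $I^{(1)}=i$, $Q^{(1)}=q$ are quasi-isomorphisms by hypothesis, so $I$ and $Q$ are $L_\infty$ quasi-isomorphisms. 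For uniqueness up to $L_\infty$ isomorphism I would invoke the standard fact that a minimal $L_\infty$-algebra quasi-isomorphic to a given one is determined up to isomorphism, proved by lifting $\mathrm{id}_H$ through $I$ and correcting it arity by arity, each obstruction vanishing because the linear parts are isomorphisms.

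Finally, for the explicit formula I would unwind the series $\sum_{n\ge 0}\widehat{q}\,(\delta_2\widehat{K})^{n}\delta_2\,\widehat{i}$. Since $\delta_2$ is quadratic, every instance of $\delta_2$ is a trivalent vertex, each interior $\widehat{K}$ decorates an internal edge, the outermost $\widehat{q}$ decorates the root and the $\widehat{i}$'s decorate the leaves; collecting the terms that carry $\Lambda^k sH$ into $\Lambda^1sH$ then reorganizes the sum, once the d\'ecalage signs of \eqref{otroole} are taken into account, into a sum over isomorphism classes of rooted binary trees $T\in\mathscr{T}_k$ whose summand is the composite $q\circ\ell_T$ obtained by reading $T$ with the bracket at the vertices, $K$ on the internal edges and $i$ on the leaves. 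The denominators $|\Aut(T)|$ absorb the multiplicity with which the symmetrizations in \eqref{eles} reproduce the same tree. Apart from the coalgebra-compatibility mentioned above, the remaining task is the bookkeeping of these signs and multiplicities, which I would not carry out in detail here.
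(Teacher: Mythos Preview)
The paper does not actually prove Theorem~\ref{HTT}: it is stated as the classical Homotopy Transfer Theorem, attributed to \cite{LV}, and closed immediately with a $\qed$; what follows the statement is only an explanation of the notation in formula~\eqref{formula}, not an argument. Your sketch via the homological perturbation lemma on $\mathcal{C}(L)$ is precisely the approach of \cite{LV}, so you are supplying the proof that the paper merely cites. The outline is sound; the delicate points you flag---compatibility of the perturbed maps with the coalgebra structure and the sign/multiplicity bookkeeping yielding the $|\Aut(T)|$ denominators---are indeed where the work lies, and are handled in that reference.
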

 We describe here every item in formula (\ref{formula}). Let $\mathscr{T}_k$ be the set of isomorphism classes of directed planar binary rooted trees  with exactly $k$ leaves. For such a tree $T$ label the leaves  by $i$, each internal edge by $K$,  and each
internal vertex by $[\,\,,\,]$. This produces a
linear map
$$
\widetilde\ell_T\colon H^{\otimes k}\longrightarrow H
$$
by moving down from the leaves to the root. For example, for $k=4$, the following tree
$$
\xymatrixcolsep{1pc}
\xymatrixrowsep{1pc}
\entrymodifiers={=<1pc>} \xymatrix{
*{^i}\ar@{-}[dr] & *{} & *{^i}\ar@{-}[dl] & *{} & *{^i}\ar@{-}[dr] & & *{^i} \ar@{-}[dl]\\
*{} & {[\,,]} \ar@{-}[drr]|K & *{} & *{} & *{} & [\,,]\ar@{-}[dll]|K & *{} \\
*{} & *{} & *{} & [\,,]\ar@{-}[d] & *{} & *{} & *{} \\
*{} & *{} & *{} & *{_{\stackrel{}{}}} & *{} & *{} & *{} \\
}
$$
produces the map
$$
[\,,]\circ((K\circ [\,,]\circ(i\otimes i))\otimes (K\circ [\,,]\circ (i\otimes i))).
$$
Then,
$$
\ell_T=\widetilde\ell_{{T}}\circ \mathcal{S}_k$$
 where
 $$
 \mathcal{S}_k\colon V^{\otimes k} \to V^{\otimes k},\quad \mathcal{S}_k( v_1\ldots v_k)= \sum_{\sigma\in S_k}\varepsilon_{\sigma}\varepsilon\, v_{\sigma(1)}\ldots v_{\sigma(n)}
$$
is the symmetrization map
in which $\varepsilon_{\sigma}$
denotes the signature of the permutation and $\varepsilon$ is the sign
given by the Koszul convention.

Finally, $\Aut (T)$ stands for the automorphism group of the tree $T$.

\begin{remark}\label{remark} (i) The uniqueness property is clear. Indeed,
different homotopy retracts of $L$ produce quasi-isomorphic $L_\infty$ structures on $H$. But, since all of them are minimal, they are also isomorphic (see for instance \cite[\S4]{Kon03}).

(ii) Another invariant of transferred $L_\infty$ structures on $H$, in fact on isomorphism classes of minimal $L_\infty$ algebras is the least $k$ for which $\ell_k$ is non trivial, and the bracket $\ell_k$ itself.

\end{remark}

\section{Higher order Whitehead products and $L_\infty$ structures}

Let $(L, i, q, K)$ be a homotopy retract of a given differential graded Lie algebra $L$. The most general result relating Whitehead brackets on $H$ and brackets of the transferred $L_\infty$ structure  depends heavily on a theorem of C. Allday \cite[Thm. 4.1]{all}, see also \cite[Thm. V.7(7)]{Tanre}, and reads as follows.

\begin{proposition}\label{elprime} Let $x_1,\dots,x_k\in H$ and assume that $[x_1,\dots,x_k]_W$ is non empty. Then, for any homotopy retract of $L$ and for any $x\in [x_1,\dots,x_k]_W$,
$$
\epsilon\, \ell_k(x_1,\dots, x_k)= x+\Gamma,\quad \Gamma\in \sum_{j=1}^{k-1}\im\ell_j,
$$
where $\epsilon=(-1)^{\sum_{i=1}^{k-1}(k-i)|x_i|}$.
In particular, if $\ell_j=0$ for $j\le k-1$, then up to a sign, $\ell_k(x_1,\dots, x_k)\in  [x_1,\dots, x_k]_W$.
\end{proposition}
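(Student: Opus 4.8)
The plan is to start from the purely algebraic description of $[x_1,\dots,x_k]_W$ recalled above (diagram~(\ref{Whiteheadlie})), which repackages Allday's theorem \cite[Thm.~4.1]{all} (see also \cite[Thm.~V.7(7)]{Tanre}), and then feed a concrete \emph{defining system} into the tree formula~(\ref{formula}) for the transferred brackets, keeping track of the corrections produced by the homotopy $K$.

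Since $x\in[x_1,\dots,x_k]_W$, choose $\phi\colon(\lib(U),\partial)\to L$ extending $\varphi$ with $\overline{\phi(\omega)}=x$ and set $a_I:=\phi(u_I)$ for every nonempty proper $I=\{i_1<\dots<i_s\}\subsetneq\{1,\dots,k\}$. Applying $\phi$ to the definition of $\partial u_I$ gives
$$
\partial a_I=\sum_{p=1}^{s-1}\sum_{\sigma\in\widetilde S(p,s-p)}\varepsilon(\sigma)\bigl[a_{I'_\sigma},a_{I''_\sigma}\bigr]=:b_I ,
$$
where $I'_\sigma,I''_\sigma$ are the two blocks of $\sigma$, and likewise $\phi(\omega)=b_{\{1,\dots,k\}}$, while $a_{\{j\}}=\varphi(u_j)$ is a cycle representing $x_j$. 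Since $q$ is a chain map into $(H,0)$ with $qi=\mathrm{id}$, every cycle $z$ satisfies $q(z)=[z]$; hence $x=q(\phi(\omega))=q(b_{\{1,\dots,k\}})$, and also $q(b_I)=q(\partial a_I)=0$ for every~$I$.

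Now I rewrite each $a_I$ with $|I|\ge 2$ through the contracting homotopy. From ${\rm id}_L-iq=\partial K+K\partial$, $\partial a_I=b_I$ and $\partial b_I=0$ we get $\partial(Kb_I)=b_I-iq(b_I)=b_I$, so $c_I:=a_I-Kb_I$ is a cycle and $a_I=Kb_I+i\,[c_I]+\partial(Kc_I)$; at the leaves $a_{\{j\}}=i(x_j)+\partial(Ka_{\{j\}})$. Substituting these expressions repeatedly into $b_{\{1,\dots,k\}}$ and expanding all brackets writes $\phi(\omega)$ as a sum of terms of three kinds: (a) fully expanded $K$-decorated binary trees whose leaves carry $i(x_1),\dots,i(x_k)$; (b) terms in which at least one ``stopped'' super-leaf $i[c_I]$ with $|I|\ge 2$ occurs, the remaining super-leaves being singletons $i(x_j)$; (c) terms containing a factor $\partial(Kc_I)$ or $\partial(Ka_{\{j\}})$. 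Apply $q$. The type~(c) terms do not contribute, exactly as in the proof of the Homotopy Transfer Theorem \cite{LV}: a $\partial$ is either outermost, where $q\partial=0$, or is moved onto a cycle, or the whole term is globally exact. The type~(a) contribution is, after $q$ and after matching the iterated shuffle expansion of $b_{\{1,\dots,k\}}$ with the symmetrized $\Aut$-weighted tree sum $\sum_{T\in\mathscr T_k}|\Aut(T)|^{-1}\widetilde\ell_T\circ\mathcal S_k$ of~(\ref{formula}), precisely $\epsilon\,\ell_k(x_1,\dots,x_k)$, the sign $\epsilon=(-1)^{\sum_{i=1}^{k-1}(k-i)|x_i|}$ accounting for the suspension normalizations in~(\ref{otroole})--(\ref{eles}). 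Finally a type~(b) term, after $q$ and after summing over the compatible outer tree shapes, is $\pm\ell_m$ applied to a tuple made of the classes $[c_I]$ at the stopped super-leaves and the relevant $x_j$ at the singleton ones, $m$ being the number of super-leaves; since some stopped $I$ has $|I|\ge 2$ we have $m\le k-1$, so such a term lies in $\im\ell_m\subset\sum_{j=1}^{k-1}\im\ell_j$. Collecting, $x=\epsilon\,\ell_k(x_1,\dots,x_k)-\Gamma$ with $\Gamma\in\sum_{j=1}^{k-1}\im\ell_j$, i.e.\ $\epsilon\,\ell_k(x_1,\dots,x_k)=x+\Gamma$. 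The final assertion is then immediate: if $\ell_j=0$ for all $j\le k-1$ then $\Gamma=0$, so $\epsilon\,\ell_k(x_1,\dots,x_k)=x\in[x_1,\dots,x_k]_W$.

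The step I expect to be the main obstacle is the type~(a) identification: reconciling the ordered, shuffle-indexed expansion produced by iterating the formula for $\partial u_{1\dots k}$ with the fully symmetrized, $\Aut(T)$-weighted tree sum of~(\ref{formula}), and tracking the Koszul and suspension signs carefully enough to pin down the exact constant $\epsilon$. This is exactly the point at which the input \cite[Thm.~4.1]{all} (equivalently \cite[Thm.~V.7(7)]{Tanre}) does the real work; the rest is the routine homotopy-perturbation bookkeeping that also underlies the proof of Theorem~\ref{HTT}.
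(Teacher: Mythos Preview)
Your route is genuinely different from the paper's, and as written it has a real gap together with a misdiagnosis of where Allday's theorem enters. The paper never expands $\phi(\omega)$ through the tree formula~(\ref{formula}). Instead it works entirely on the cocommutative side: the quasi-isomorphisms $I,Q$ of Theorem~\ref{HTT} identify the word-length spectral sequences of ${\mathcal C}(L)$ and $(\Lambda sH,\delta)$ from the $E^1$ page on, and Allday's theorem \cite[Thm.~4.1]{all}, transported along this identification, says precisely that $sx_1\wedge\dots\wedge sx_k$ survives to $E^{k-1}$ with $\delta^{k-1}$ sending its class to $\overline{sx}$. Unwinding this gives a $\Phi\in\Lambda^{\le k-1}sH$ with $\delta(sx_1\wedge\dots\wedge sx_k+\Phi)=sx$; projecting to word length one and invoking~(\ref{otroole}) yields the statement in two lines, the sign $\epsilon$ coming solely from $s^{\otimes k}$. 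No tree combinatorics, no homotopy $K$, no defining system appear.

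In your argument, by contrast, Allday's theorem is never actually invoked: you are attempting a direct perturbative computation in $L$. Your type~(a) identification is not where \cite{all} does the work; it is exactly the combinatorial induction carried out in the proof of Theorem~\ref{main1} (equation~(\ref{ecuacion})), and can be quoted from there. The genuine gap is the type~(b) step. You assert that, once some super-leaves are frozen at $i[c_I]$ and the rest at $i(x_j)$, summing over the compatible outer tree shapes reproduces $\pm\ell_m$ of those labels. This is a nontrivial combinatorial lemma: one must show that the iterated shuffle expansion of $\partial u_{1\dots k}$, restricted to trees refining a fixed set partition $\{S_1,\dots,S_m\}$ of $\{1,\dots,k\}$, collapses to the full symmetrized, $\Aut$-weighted tree sum~(\ref{formula}) on $m$ inputs. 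It is plausible (the recursive shape of $\partial u_{1\dots k}$ is stable under coarsening of the index set), but you neither prove it nor reduce it to a cited statement, and it is not contained in the standard proof of Theorem~\ref{HTT}, which only treats singleton leaves. Without this lemma your type~(b) contributions are merely elements of $H$, not visibly in $\sum_{j\le k-1}\im\ell_j$, and the conclusion does not follow. The paper's spectral-sequence approach sidesteps all of this: the $\Phi$ it produces packages the corrections abstractly, and the word-length projection shows directly that they land in $\sum_j\im h_j=\sum_j\im\ell_j$.
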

In the remaining of the paper $\epsilon$ will always denote the above sign.
\begin{proof} Recall that the Quillen spectral sequence of $L$ \cite{Quillen} is defined by filtering the chains ${\mathcal C}(L)$ by the kernel of the reduced diagonals, $F_p=\Lambda^{\le p}sL$. Consider the DGC quasi-isomorphisms of Theorem \ref{HTT}
$$
\xymatrix{ {\mathcal C}(L)
\ar@<0.75ex>[r]^-Q & (\Lambda sH,\delta ) \ar@<0.75ex>[l]^-I },
$$
choose the same filtration on $\Lambda sH$, and observe that at the $E^1$ level the induced morphisms of spectral sequences are both the identity on $\Lambda sH$. By comparison, all the terms in both spectral sequences are also isomorphic. Now, translating \cite[Thm. 4.1]{all}  to the spectral sequence on $\Lambda sH$ we obtain that if $[x_1,\dots,x_k]_W$ is non empty, then the element $sx_1\wedge\ldots\wedge sx_k$ survives  to  the $k-1$ page $(E^{k-1},\delta^{k-1})$. Moreover, given any $x\in [x_1,\ldots,x_k]_W $, one has
$$\delta^{k-1}\,\,\overline{sx_1\wedge\ldots\wedge sx_k}^{\,k-1}=\overline{sx}^{\,k-1}.
$$
Here $\overline{(\cdot)}^{\,k-1}$ denotes the class in $E^{k-1}$. This is to say that there exists $\Phi\in\Lambda^{\le k-1}sH$ such that
\begin{equation}\label{ole}
\delta(sx_1\wedge\ldots\wedge sx_k+\Phi)=sx.
\end{equation}
Write $\delta=\sum_{i\ge 1}\delta_i$ with each $\delta_i$ as in formula (\ref{eles}),  and decompose $\Phi=\sum_{i=2}^{k-1}\Phi_i$ with $\Phi_i\in \Lambda^isH$. By a word length argument,
$$
 \delta_k(sx_1\wedge\ldots\wedge sx_k)+\sum_{i=2}^{k-1}\delta_i(\Phi_i)=sx.
 $$
Note also that $\delta_k=h_k$ for elements of word length $k$, with $h_k$ as in (\ref{otroole}) and (\ref{eles}).  Therefore, $$h_k(sx_1\wedge\ldots\wedge sx_k)+\sum_{i=2}^{k-1}h_i(\Phi_i)=sx.$$
To finish, apply to this equation the identity (\ref{otroole}) which is equivalent to
$$
\ell_i=s^{-1}\circ h_i\circ s^{\otimes i}\quad\text{for any}\quad i\ge 1.
$$
In particular, the sign $\varepsilon$ appears when writing
$$
\ell_k(x_1,\dots, x_k)=s^{-1}\circ h_k\circ s^{\otimes k}(x_1,\dots, x_k)=\varepsilon\,s^{-1} h_k(sx_1\wedge\ldots\wedge sx_k).
$$

\end{proof}

 Next we find $k$th order Whitehead brackets that are detected precisely and only by $k$th brackets of the $L_\infty$ structure.

  Recall that, any $x\in[x_1,\dots,x_k]_W$ is produced by a DGL morphism  $\phi\colon (\lib(U),\partial)\to L$  as in diagram (\ref{Whiteheadlie}). Write,
 $$U=\langle u_1,\dots,u_k\rangle\oplus V,\quad\text{that is,}\quad
V=\langle  u_{i_1\dots  i_s}\rangle,\quad s\ge 2.
$$

 On the other hand, any homotopy retract can be obtained by decomposing  $L=A\oplus \partial A\oplus C$ with $\partial\colon A\stackrel{\cong}{\to} \partial A$ and $C\cong H$ a subspace of cycles. For it define $i\colon H\cong C\hookrightarrow L$, $q\colon L\twoheadrightarrow C\cong H$ and  $K(A)=K(C)=0$, $K\colon \partial A \stackrel{\cong}{\to} A$.

\begin{definition} With the notation above, a homotopy retract of $L$ is {\em adapted} to $x\in[x_1,\dots,x_k]_W$  if $\phi(V)\subset A$. In particular,
\begin{equation}\label{k}
K\partial\phi(u_{i_1\dots i_s})=\phi(u_{i_1\dots i_s})\quad \text{for any generator}\quad u_{i_1\dots i_s}\in V.
\end{equation}
\end{definition}

 \begin{theorem}\label{main1} Let $x\in[x_1,\dots,x_k]_W$. Then, for any homotopy retract of $L$ adapted to $x$,
  $$
  \epsilon\,\ell_k(x_1,\dots, x_k)=x.
  $$
  \end{theorem}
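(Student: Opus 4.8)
The plan is to compute $\ell_k(x_1,\dots,x_k)$ from the tree formula (\ref{formula}) and to identify the outcome with $q$ of the attaching element $\phi(\omega)$, where $\phi\colon(\lib(U),\partial)\to L$ is a DGL map as in diagram (\ref{Whiteheadlie}) with $\overline{\phi(\omega)}=x$.

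The first step is to unfold $\phi(\omega)$ using adaptedness. Since $\phi$ is a DGL map and $\phi(u_I)\in A$ for every generator $u_I\in V$, the identity $\mathrm{id}_L=iq+\partial K+K\partial$ collapses on $\phi(u_I)$ to its single summand $K\partial\phi(u_I)$; this is precisely (\ref{k}): $\phi(u_I)=K\phi(\partial u_I)$, and it is the vanishing of the $iq$ and $\partial K$ summands that removes the lower-order ambiguity of Proposition \ref{elprime}. By the shuffle formula for $\partial u_I$ (of which (\ref{tan}) is the case $I=\{1,\dots,k\}$), the element $\phi(\partial u_I)$ is a sum of brackets $\pm[\phi(u_{J_1}),\phi(u_{J_2})]$ over the ordered splittings $I=J_1\sqcup J_2$ with $\min I\in J_1$. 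Iterating from $\phi(\omega)$ down to the singletons $\phi(u_j)$ thus writes $\phi(\omega)$ as a sum, indexed by the planar binary trees each of whose internal vertices splits off the smallest available leaf-label on the left, of the elements read off by placing $\phi(u_j)$ at the leaves, $K$ on the internal edges and $[\,,\,]$ at the internal vertices. Each abstract binary tree with leaves bijectively labelled by $\{1,\dots,k\}$ admits exactly one such planar representative; and replacing the homologous cycles $\phi(u_j)$ by $i(x_j)$ at the leaves (legitimate either after normalizing $\phi(u_j)=i(x_j)$ via the representative-independence of $[x_1,\dots,x_k]_W$, or directly because the boundary corrections telescope to zero under $q$, as $q\partial=0$ and $\partial K=\mathrm{id}-iq-K\partial$) gives $q\phi(\omega)=\sum_T\pm\,q\,\widetilde\ell_T\bigl(i(x_1)\otimes\cdots\otimes i(x_k)\bigr)$, summed over all such labelled trees $T$.

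The second step is to put (\ref{formula}) into the same shape: graded symmetry of $\ell_2$ makes every $\widetilde\ell_T$ well defined on abstract labelled trees, and unwinding $\mathcal S_k$ against the normalizing factors $1/|\Aut(T)|$ presents $\ell_k(x_1,\dots,x_k)$ as $\sum_T\pm\,q\,\widetilde\ell_T\bigl(i(x_1)\otimes\cdots\otimes i(x_k)\bigr)$ over the same index set, each tree contributing once. The two expressions then coincide up to sign.

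The step I expect to be the main obstacle is this final sign-and-multiplicity reconciliation: one must check, tree by tree, that the shuffle signs $\varepsilon(\sigma)$ arising from Quillen's differential and from the recursive unfolding of $\phi(\omega)$ match the signature-and-Koszul signs produced by $\mathcal S_k$, and that the overcounting in $\mathcal S_k$ exactly cancels the factors $|\Aut(T)|$. Once that is done, the remaining universal sign is pinned to $\epsilon$ by the suspension identity (\ref{otroole}) — equivalently, by comparison with Proposition \ref{elprime}, which then also forces its error term $\Gamma$ to be $0$ — and we conclude $\epsilon\,\ell_k(x_1,\dots,x_k)=q\phi(\omega)=x$.
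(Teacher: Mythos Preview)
Your proposal is correct and follows essentially the same approach as the paper: use adaptedness to get $\phi(u_I)=K\phi(\partial u_I)$, recursively unfold $\phi(\omega)$ via the shuffle differential into a tree sum, and match it against the tree formula (\ref{formula}) for $\ell_k$. The paper organizes the argument as a single induction on the length $p$ of the index, proving directly that $\phi(\partial u_{i_1\dots i_p})=\epsilon\sum_{T\in\mathscr{T}_p}\frac{1}{|\Aut T|}\ell_T(x_{i_1},\dots,x_{i_p})$ (equation (\ref{ecuacion})), which handles the sign-and-multiplicity reconciliation you flag as the main obstacle in one pass rather than by expanding both sides separately; the delicate point there is exactly the $|\Aut T|$ bookkeeping when $k$ is even and $p=k/2$.
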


\begin{proof}

Let $\phi\colon (\lib(U),\partial)\to L$ with $\overline{\phi(\omega)}=x$ and consider in $H$ the $L_\infty$ structure induced by a given homotopy retract $(L,i,q,K)$ of $L$ adapted to $x$. We prove by induction on $p$, with $2\le p\le k$, that
\begin{equation}\label{ecuacion}
\phi (\partial u_{i_1\dots i_p})=\epsilon \sum_{T\in \mathscr{T}_p}\frac{1}{|\text{Aut}T|}\ell_T(x_{i_1},\dots,  x_{i_p}).
\end{equation}
The assertion is trivial for $p=2$ and assume it is satisfied for $p<k$.

Write the set  $\mathscr{T}_k$  of isomorphism classes of directed planar binary rooted trees  and exactly $k$ leaves as
$$\mathscr{T}_k=\coprod_{1\leq p\leq \lceil \frac{k}{2} \rceil } \mathscr{T}_{p, k-p},$$
 where $\mathscr{T}_{p, k-p}$ is the set of (classes of) rooted trees $T$ of the form
$$
 \xymatrixcolsep{1pc}
\xymatrixrowsep{1pc}
\entrymodifiers={=<1pc>} \xymatrix{
{F}\ar@{-}[rd]&&\ar@{-}[ld]{G}\\
&*{}\ar@{-}[d]&\\
&&
}$$
with $F\in \mathscr{T}_p$ and $G\in \mathscr{T}_{k-p}$. Note that, whenever $k$ is even and $p=\frac{k}{2}$ then, for any pair $F,G\in \mathscr{T}_{\frac{k}{2}}$ with $F\not= G$, the trees
$$
 \xymatrixcolsep{1pc}
\xymatrixrowsep{1pc}
\entrymodifiers={=<1pc>} \xymatrix{
{F}\ar@{-}[rd]&&\ar@{-}[ld]{G} & && &  {G}\ar@{-}[rd]&&\ar@{-}[ld]{F}    \\
&*{}\ar@{-}[d]&     & && &  &*{}\ar@{-}[d]&\\
&& & &&&&&
}$$
are in the same class.

If $T\in \coprod_{1\leq p\le \lceil \frac{k}{2} \rceil } \mathscr{T}_{p, k-p}$, then
 ${|\text{Aut}T|}={|\text{Aut}F||\text{Aut}G|}$ except when $p= \frac{k}{2} $ and $T\in\mathscr{T}_{\frac{k}{2},\frac{k}{2}}$  is such that $F=G$. In this case, which only occurs whenever $k$ is even,  ${|\text{Aut}T|}=2|\text{Aut}F||\text{Aut}G|$.

In what follows we omit signs  to avoid excessive notation. On the one hand, splitting the summation for $p=1$, $1<p<\lceil \frac{k}{2}\rceil$ and $p=\frac{k}{2}$  (which only occurs whenever $k$ is even), we have:
\begin{align*}
&\sum_{T\in \mathscr{T}_k}\frac{1}{|\text{Aut}T|}\ell_T(x_{1},\dots,, x_{k})=\sum_{T\in \mathscr{T}_k}\frac{1}{|\text{Aut}T|}\widetilde\ell_T\circ \mathcal{S}_k (x_1,\dots, x_k)=\\
=&\sum_{T\in \mathscr{T}_k}\sum_{\sigma \in S_k}\frac{1}{|\text{Aut}T|}\widetilde\ell_T(x_{\sigma (1)},\dots, x_{\sigma (k)})\\
=&\sum_{\sigma \in S(1, k-1)}\Biggl[ix_{\sigma (1)}, K\Bigl(\sum_{T\in \mathscr{T}_{k-1}}\frac{1}{|\text{Aut}T|} \widetilde\ell_{T}(x_{\sigma (2)},\dots, x_{\sigma (k)})\Bigr)\Biggr]\\
&+\sum_{1< p< \lceil \frac{k}{2} \rceil}\sum_{\sigma \in S(p, k-p)}\Biggr[K\Bigr(\sum_{F\in \mathscr{T}_p} \sum_{\tau \in S_p}\frac{1}{|\text{Aut}F|}\widetilde\ell_F(x_{\tau \sigma (1)},\dots, x_{\tau \sigma (p)})\Bigl),
 \\
 &\hskip 3,6cm,K\Bigl(\sum_{G\in \mathscr{T}_{k-p}}\sum_{\nu \in S_{k-p}}\frac{1}{|\text{Aut}G|}\widetilde\ell_G(x_{\nu \sigma (p+1)},\dots,  x_{\nu \sigma (k)})\Bigr)\Biggr]\\
 &+\frac{1}{2}\sum_{\sigma \in S(\frac{k}{2},\frac{k}{2})}\Biggr[K\Bigr(\sum_{F\in \mathscr{T}_{\frac{k}{2}}} \sum_{\tau \in S_{\frac{k}{2}}}\frac{1}{|\text{Aut}F|}\widetilde\ell_{F}(x_{\tau \sigma (1)},\dots, x_{\tau \sigma (\frac{k}{2})})\Bigl),
 \\
 &\hskip 2,5cm,K\Bigl(\sum_{G\in \mathscr{T}_{\frac{k}{2}}}\sum_{\nu \in S_{\frac{k}{2}}}\frac{1}{|\text{Aut}G|}\widetilde\ell_{G}(x_{\nu \sigma (\frac{k}{2}+1)},\dots,  x_{\nu \sigma (k)})\Bigr)\Biggr]=(\dag)\\
\end{align*}
Note that the last summand appears only if $k$ is even. The $\frac{1}{2}$ coefficient arises from the observation above.

On the other hand, remark that the formula (\ref{tan}) can be written alternatively as
\begin{align*}
\quad\partial u_{1\dots k} =&\sum_{1\leq p< \lceil \frac{k}{2} \rceil}\sum_{\sigma \in S(p, k-p)} \Bigr[ u_{\sigma (1)\dots \sigma (p)},u_{\sigma (p+1)\dots \sigma (k)}\Bigr]\\
&+\frac{1}{2}\sum_{\sigma \in S(\frac{k}{2},\frac{k}{2})}\Bigl[  u_{\sigma (1)\dots \sigma (\frac{k}{2})}, u_{\sigma (\frac{k}{2}+1) \dots \sigma (k)} \Bigr]\\
\end{align*}
Thus, in view of equation (\ref{k}) and by induction hypothesis, we have, also modulo signs:
\begin{align*}
\,\,\phi (\partial u_{1\dots k} )=&\sum_{1\le p< \lceil \frac{k}{2} \rceil}\sum_{\sigma \in S(p, k-p)} \Bigr[ \phi (u_{\sigma (1)\dots \sigma (p)}), \phi (u_{\sigma (p+1)\dots \sigma (k)})\Bigr]\\
&+\frac{1}{2}\sum_{\sigma \in S(\frac{k}{2},\frac{k}{2})}\Bigl[ \phi( u_{\sigma (1)\dots \sigma (\frac{k}{2})}), \phi(u_{\sigma (\frac{k}{2}+1) \dots \sigma (k)} ) \Bigr]\\
=&\sum_{\sigma \in S(1, k-1)}\Bigl[\phi u_{\sigma (1)}, K\phi \partial u_{\sigma (2)\dots \sigma (k)}\Bigr]\\
&+\sum_{1< p< \lceil \frac{k}{2} \rceil}\sum_{\sigma \in S(p,k-p)}\Bigl[ K\phi \partial u_{\sigma (1)\dots \sigma (p)}, K\phi \partial u_{\sigma (p+1) \dots \sigma (k)} \Bigr]\\
&+\frac{1}{2}\sum_{\sigma \in S(\frac{k}{2},\frac{k}{2})}\Bigl[ K\phi\partial  u_{\sigma (1)\dots \sigma (\frac{k}{2})}, K\phi\partial u_{\sigma (\frac{k}{2}+1) \dots \sigma (k)}  \Bigr]\\
=&\sum_{\sigma \in S(1, k-1)}\Biggl[ix_{\sigma (1)}, K\Bigl(\sum_{T\in \mathscr{T}_{k-1}}\frac{1}{|\text{Aut}T|}\widetilde\ell_{T}\circ \mathcal{S}_{k-1} (x_{\sigma (2)},\dots, x_{\sigma (k)})\Bigr)\Biggr]\\
&+\sum_{1< p<\lceil \frac{k}{2} \rceil}\sum_{\sigma \in S(p,k-p)} \Bigg[ K\Bigl( \sum_{F \in \mathscr{T}_p}\frac{1}{|\text{Aut}F|}\widetilde\ell_F\circ \mathcal{S}_p(x_{\sigma (1)},\dots, x_{\sigma (p)})\Bigr),\\
&\hskip 2.5cm, K\Bigl( \sum_{G\in \mathscr{T}_{k-p}}\frac{1}{|\text{Aut}G|}\widetilde\ell_G\circ \mathcal{S}_{k-p}(x_{\sigma (p+1)},\dots, x_{\sigma (k)})\Bigr)\Biggr]\\
\end{align*}
\begin{align*}
&+\frac{1}{2}\sum_{\sigma \in S(\frac{k}{2},\frac{k}{2})}\Biggr[K\Bigr(\sum_{F\in \mathscr{T}_{\frac{k}{2}}} \frac{1}{|\text{Aut}F|}\widetilde\ell_{F}\circ {\mathcal S}_{\frac{k}{2}}(x_{ \sigma (1)},\dots, x_{ \sigma (\frac{k}{2})})\Bigl),
 \\
 &\hskip 2,5cm,K\Bigl(\sum_{G\in \mathscr{T}_{\frac{k}{2}}}\frac{1}{|\text{Aut}G|}\widetilde\ell_{G}\circ{\mathcal S}_{\frac{k}{2}}(x_{\sigma (\frac{k}{2}+1)},\dots,  x_{ \sigma (k)})\Bigr)\Biggr]=(\dag)\\
\end{align*}
and the assertion is proved. In particular, by the explicit formula for  $\ell_k$  in Theorem \ref{HTT},
$$
q\phi (\omega)= \epsilon \,q\sum_{T\in \mathscr{T}_k}\frac{1}{|\text{Aut}T|}\ell_T(x_1,\dots,  x_k)=\epsilon\,\ell_k(x_1,\dots, x_k).
$$
That is,
$\epsilon\,\ell_k(x_1,\dots, x_k)\in [x_1,\dots,x_k]_W$.
\end{proof}

\begin{remark} The {\em  Higher Massey products} set \cite{Massey} $\langle a_1,\dots,a_k\rangle_M\subset H^*(A)$ of order $k$ of   classes $a_1,\dots,a_k\in H^*(A)$ in the cohomology of a  given differential graded algebra  $(A,d)$ (or simply $A$) can be thought of as the ``Eckmann-Hilton'' dual of higher Whitehead brackets of order $k$. There is also an $A_\infty$ version of Theorem \ref{HTT} for a given retract of $A$,
$$
\xymatrix{ \ar@(ul,dl)@<-5.5ex>[]_K  & (A,d )
\ar@<0.75ex>[r]^-q & (H,0), \ar@<0.75ex>[l]^-i }
$$
which produces an $A_\infty$ structure $\{m_k\}$ on $H$ and $A_\infty$ quasi-isomorphisms (see for instance \cite{GS86} or \cite{HK91}):
$$
\xymatrix{  (A,d )
\ar@<0.75ex>[r]^-Q & (H,\{m_k\}). \ar@<0.75ex>[l]^-I }
$$
Recall that an $A_\infty$-algebra \cite{Stasheff} is a graded vector space $H$ endowed with  a sequence of maps $m_k\colon H^{\otimes k}\to H$ of degree $2-k$, for $k\ge 1$, satisfying a series of ``associative'' identities. Each of these maps is identified, up to suspensions and signs, with a degree $1$ map $\delta_k\colon (sH)^{\otimes k}\to sH$ which produces a differential $\delta$ on the graded colgebra  $T(sH)$. Filtering this DGC by $F_p=(sH)^{\otimes\le p}$ we obtain the {\em Eilenberg-Moore spectral sequence} from which, following the argument of Proposition \ref{elprime} now based in \cite[Thm. V.7(6)]{Tanre}, we obtain:

 If $\langle a_1,\dots,a_k\rangle_M$ is non empty, then, for any $a\in \langle a_1,\dots,a_k\rangle_M$, and any homotopy retract of $A$,
$$
\epsilon\,m_k(a_1\dots a_k)=a+\Gamma,\quad \Gamma\in \sum_{j=1}^{k-1}\im m_j.
$$
In this setting one can go a step further, see \cite[Theorem 3.1]{Palmieri}, and prove that for any homotopy retract of $A$, if $\langle a_1,\dots,a_k\rangle_M$ is non empty, then $\epsilon\, m_k(a_1,\dots,a_k)\in \langle a_1,\dots,a_k\rangle_M$.

Except for the case $k=3$ in Corollary \ref{caso3} below, this particular behavior cannot be attained in general in the $L_\infty$ setting  and additional conditions are required as the following result shows.
\end{remark}
\begin{theorem}\label{elsegundo} Let $L$ be a DGL such that, on $H$, $\ell_i=0$ for $i\le k-2$ with $k\ge 3$. If $[x_1,\dots,x_k]_W\not= \emptyset$, then
$$
\epsilon\,\ell_k(x_1,\dots, x_k)\in [x_1,\dots,x_k]_W.
$$
\end{theorem}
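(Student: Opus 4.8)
The strategy is to reduce Theorem \ref{elsegundo} to Theorem \ref{main1} by exhibiting, under the hypothesis $\ell_i = 0$ for $i \le k-2$, a homotopy retract that is adapted (in the sense of the Definition) to \emph{some} element of $[x_1,\dots,x_k]_W$. More precisely, fix a DGL morphism $\phi\colon(\lib(U),\partial)\to L$ extending $\varphi$, realizing some $x_0 = \overline{\phi(\omega)} \in [x_1,\dots,x_k]_W$. What I would do is modify $\phi$ on the generators $u_{i_1\dots i_s}$ (working up by the word-length/subscript-length $s$, starting from $s=2$) so that, for the chosen homotopy retract decomposition $L = A \oplus \partial A \oplus C$, each $\phi(u_{i_1\dots i_s})$ lands in $A$; then by Theorem \ref{main1} the resulting $\epsilon\,\ell_k(x_1,\dots,x_k)$ equals the new class, which by construction still lies in $[x_1,\dots,x_k]_W$.

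\textbf{Carrying this out.} First I would start with an arbitrary homotopy retract $(L,i,q,K)$ presented via $L = A\oplus\partial A\oplus C$ as in the text, and an arbitrary extension $\phi$. For $s=2$: each $\phi(u_{ij})$ satisfies $\partial\phi(u_{ij}) = \phi(\partial u_{ij}) = [\varphi(u_i),\varphi(u_j)]$, but $\phi(u_{ij})$ itself need not be in $A$. I would replace $\phi(u_{ij})$ by $K\partial\phi(u_{ij})$. This changes $\phi(u_{ij})$ by a cycle (since $\partial K\partial = \partial - K\partial^2 \cdot(\text{sign})$ — more precisely $\partial K\phi(u_{ij}) = (\mathrm{id} - iq - K\partial)\phi(u_{ij}) + \text{cycle terms}$ must be checked to still equal $[\varphi(u_i),\varphi(u_j)]$, using that $q\phi(u_{ij})$ and the homology class are controlled by $\ell_2 = 0$). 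This is where the hypothesis $\ell_i = 0$ for small $i$ enters: the discrepancy between $\phi(u_{i_1\dots i_s})$ and $K\partial\phi(u_{i_1\dots i_s})$ is a cycle of degree $n_{i_1}+\cdots+n_{i_s}-1$, and its homology class is (up to sign and lower brackets) $\ell_s(x_{i_1},\dots,x_{i_s})$ by Proposition \ref{elprime} applied to the shorter sub-bracket — which vanishes when $s \le k-2$. So for each $s \le k-2$ the correction is genuinely a boundary, and one can absorb it by further adjusting $\phi$ on generators $u_{i_1\dots i_{s+1}}$ so that $\partial u_{i_1\dots i_{s+1}}$ is still hit correctly. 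Iterating, I obtain $\phi'$ with $\phi'(u_{i_1\dots i_s}) \in A$ for all $s$ with $2 \le s \le k-1$ (the top case $s = k-1$ also being fine since $k-1 \le k-2$ fails, but $u_{i_1\dots i_{k-1}}$-generators have subscript length $k-1 > k-2$ only when... actually $s \le k-1$ and we need $s \le k-2$ for the automatic vanishing; the generators of subscript length exactly $k-1$ require a separate remark: they do \emph{not} appear as proper sub-brackets in $\omega$ in a way that forces adaptedness for them, or one checks $\mathscr{T}_k$ only sees brackets built from shorter pieces — this edge case needs care). The resulting retract is then adapted to $x = \overline{\phi'(\omega)}$, and $x \in [x_1,\dots,x_k]_W$ by diagram (\ref{Whiteheadlie}).

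\textbf{Conclusion and the main obstacle.} Once $\phi'$ has image of $V$ inside $A$, equation (\ref{k}) holds, Theorem \ref{main1} applies verbatim, and gives $\epsilon\,\ell_k(x_1,\dots,x_k) = \overline{\phi'(\omega)} \in [x_1,\dots,x_k]_W$, completing the proof. The main obstacle I anticipate is the inductive correction step: showing that after replacing $\phi(u_{i_1\dots i_s})$ by its $K\partial$-projection one can consistently re-extend over the next layer of generators $u_{i_1\dots i_{s+1}}$ so that $\partial u_{i_1\dots i_{s+1}}$ still maps to a boundary of the right element. This amounts to verifying that the obstruction class — which is a sum of Whitehead-type brackets of the adjusted lower generators — lands in $\mathrm{im}\,\partial$, and this is precisely what the vanishing $\ell_i = 0$ for $i \le k-2$ buys us via Proposition \ref{elprime}; keeping track of which sub-bracket lengths are forced to be $\le k-2$ (versus the borderline length $k-1$) is the delicate bookkeeping. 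I would also need to double-check the signs $\epsilon$ propagate correctly through the re-definition of $\phi$, but that is routine given the conventions already fixed.
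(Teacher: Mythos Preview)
Your overall strategy---build an extension $\phi$ with $\phi(V)\subset A$ by inductively forcing each $\phi(u_{i_1\dots i_s})$ into $A$, then invoke Theorem \ref{main1}---is exactly the paper's. The base step $\phi'(u_{ij})=K\partial\phi(u_{ij})$ matches verbatim, and the mechanism for the intermediate levels $2\le s\le k-2$ is also the same (though the clean way to phrase it is via the computation (\ref{ecuacion}) in the proof of Theorem \ref{main1}, which gives $q\phi(\partial u_{i_1\dots i_s})=\epsilon\,\ell_s(x_{i_1},\dots,x_{i_s})=0$ once the lower levels are adapted; Proposition \ref{elprime} is not quite the right reference here).

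The genuine gap is the case $s=k-1$, which you flag but do not resolve. Your two guesses are both wrong: the generators $u_{i_1\dots i_{k-1}}$ \emph{do} appear in $\omega=\partial u_{1\dots k}$ (in the summands $[u_{i_1\dots i_{k-1}},u_{i_k}]$, the $p=k-1$ part of (\ref{tan})), and $\mathscr{T}_{1,k-1}\subset\mathscr{T}_k$ contains trees whose subtree has $k-1$ leaves, so adaptedness at level $k-1$ is genuinely required for Theorem \ref{main1}. The hypothesis $\ell_i=0$ for $i\le k-2$ says nothing about $\ell_{k-1}$ in general, and appealing to the sub-product $[x_{i_1},\dots,x_{i_{k-1}}]_W$ does not help: you would need to know $0$ lies in that set, which is not given. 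What the paper supplies is a separate observation (equation (\ref{masole})): from $[x_1,\dots,x_k]_W\ne\emptyset$, Allday's theorem (as in (\ref{ole})) gives $\delta(sx_1\wedge\ldots\wedge sx_k+\Phi)=sx$ with $\Phi\in\Lambda^{\le k-1}sH$; a word-length comparison using $\delta_i=0$ for $i\le k-2$ then forces $\delta_{k-1}(sx_1\wedge\ldots\wedge sx_k)=0$, which unwinds via (\ref{otroole}) to $\ell_{k-1}(x_{i_1},\dots,x_{i_{k-1}})=0$ for every $(k{-}1)$-element subset of $\{x_1,\dots,x_k\}$. With this in hand, once $\phi$ is adapted on $V_{k-1}$, the proof of Theorem \ref{main1} gives $q\phi(\partial u_{i_1\dots i_{k-1}})=\epsilon\,\ell_{k-1}(x_{i_1},\dots,x_{i_{k-1}})=0$, so $\phi(\partial u_{i_1\dots i_{k-1}})=\partial\Psi$ and one sets $\phi(u_{i_1\dots i_{k-1}})=K\partial\Psi\in A$. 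This step is the crux of the argument, not bookkeeping.
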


Observe that, in view of (ii) of Remark \ref{remark} the assumption on the vanishing of $\ell_i$ for $i\le k-2$ is independent of the chosen retract of $L$ and hence, the result remains valid for any of them.

\begin{proof}  We first observe the following: consider $(\Lambda sH,\delta)$ the DGC equivalent to the $L_\infty$ structure on $H$ given by any homotopy retract of $L$. The condition $\ell_i=0$ for $i\le k-2$ is clearly  equivalent via equations (\ref{otroole}) and (\ref{eles}) to the vanishing of the coderivations $\delta_i$ of $\Lambda^+sH$ also for $i\le k-2$. On the other hand,  as in equation (\ref{ole}) in the proof of Proposition \ref{elprime}, for any $x\in[x_1,\dots,x_k]_W$, we have.
 $$
 \delta(sx_1\wedge\ldots\wedge sx_k+\Phi)=sx\quad\text{with $\Phi\in\Lambda^{\le k-1}sH$}.
 $$
 Write again $\delta=\sum_{i\ge 1}\delta_i$ with each $\delta_i$ as in formula (\ref{eles}). Thus, a word length argument
 together with $\delta_i=0$, for $i\le k-2$, readily implies in particular that
 $$
 \delta_{k-1}(sx_1\wedge\dots\wedge sx_k)=0.
 $$
 But
 $$
 \delta_{k-1}(sx_1\wedge\ldots\wedge sx_k)=\sum_{i=1}^{k}\varepsilon\, h_{k-1}(sx_{1}\wedge...\widehat{sx}_i... \wedge sx_k)\wedge sx_i.
 $$
 Hence, via identity (\ref{otroole}),
\begin{equation}\label{masole}
 \ell_{k-1}(x_{i_1},\dots, x_{i_{k-1}})=0\quad\text{for any $\,\,1\le i_1<\dots<{i_{k-1}}\le k$}.
 \end{equation}

  Next, for each $p\le k$, let $U_p\subset U$ be the subspace generated by
$$
U_s=\langle  u_{i_1\dots  i_s}\rangle,\quad 1\leq i_1<\cdots <i_s\leq k,\quad s< p.
$$
Clearly, $(\lib (U_p),\partial)$ is a sub DGL of $(\lib(U),\partial)$ and $(\lib (U_k),\partial)=(\lib(U),\partial)$. We also denote,
$$
V_p=\langle  u_{i_1\dots  i_s}\in U_p,\,\, s\ge 2\rangle.
$$
Again $U_p=V_p\oplus \langle u_1,\dots,u_k\rangle$ and $V_k=V$.
Let $L=A\oplus \partial A\oplus C$ the decomposition giving rise to the chosen arbitrary homotopy retract.
By induction on $p$, with $3\le p\le k$, we will construct a DGL morphism $\phi\colon \lib (U_p)\to L$ for which  $\phi(V_p)\subset A$.

For $p =3$, as $[x_1,\dots,x_k]_W$ is non empty, let
 $\psi \colon (\mathbb{L}(U),\partial )\to (L, \partial )$ as in (\ref{Whiteheadlie}). We define $\phi\colon \lib(U_3)\to L$ by
 $$
 \phi(u_i)=\psi(u_i),\quad i=1,2,3.\quad\phi(u_{i_1i_2})=K\partial\psi(u_{i_1i_2}),\quad 1\le i_1<i_2\le k,
 $$
Obviously $\phi(V_3)\subset A$ and using the trivial identity for any homotopy retract
$
\partial K \partial=\partial
$ we also see that $\phi$  commutes with the differential:
$$
\partial\phi(u_{i_1i_2})=\partial K\partial \psi(u_{i_1i_2})=\partial \psi(u_{i_1i_2})=\psi\partial(u_{i_1i_2})=\phi\partial(u_{i_1i_2}).
$$
Assume the assertion true for $k-1$. That is, there exists a DGL morphism
$$
\phi\colon \lib (U_{k-1})\longrightarrow L$$ for which $\phi(V_{k-1})\subset A$. In particular, we have,
$$
K\partial\phi(u_{i_1\dots i_s})=\phi(u_{i_1\dots i_s})\quad \text{for any generator}\quad u_{i_1\dots i_s}\in V_{k-1},
$$
which is equation (\ref{k}) for $\phi$. Then, the same argument as in the proof of Theorem \ref{main1} proves the analogous of equation (\ref{ecuacion}). In particular,
$$
\phi(\partial u_{i_1\dots  i_{k-1}})= \epsilon\,\sum_{T\in \mathscr{T}_{k-1}}\frac{1}{|\text{Aut}T|}\ell_T(x_{i_1},\dots,  x_{i_{k-1}}),
$$
and therefore,
$$
q\phi(\partial u_{i_1\dots  i_{k-1}})=\epsilon\,q \sum_{T\in \mathscr{T}_{k-1}}\frac{1}{|\text{Aut}T|}\ell_T(x_{i_1},\dots,  x_{i_{k-1}})=\epsilon\,\ell_{k-1}(x_{i_1},\dots,  x_{i_{k-1}}),
$$
which is zero by the observation (\ref{masole}) above. Hence,
$$
\phi(\partial u_{i_1\dots  i_{k-1}})=\partial\Psi_{i_1\dots  i_{k-1}}
$$ and we define
$$
\phi(u_{i_1\dots  i_{k-1}})=K\partial\Psi_{i_1\dots  i_{k-1}}.
$$
Obviously $\phi(V_k)=\phi(V)\subset A$ and using again the identity  $\partial K\partial=\partial$ we see that $\phi$  commutes with differentials. Therefore $\overline{\phi(\omega)}$ is an element in $[x_1,\dots,x_k]_W$ for which we can apply  Theorem \ref{main1} and the proof is finished.

\end{proof}

\begin{corollary}\label{caso3} Let $x_1,x_2,x_3\in H$ such that  $[x_1,x_2,x_3]_W\not= \emptyset$. Then, for any homotopy retract,
$$
\epsilon\,\ell_3(x_1, x_2, x_3)\in [x_1,x_2,x_3]_W.
$$
\hfill$\square$
\end{corollary}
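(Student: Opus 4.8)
The plan is to read this off directly from Theorem \ref{elsegundo} specialized to $k=3$. The only hypothesis in that theorem is that $\ell_i=0$ for $i\le k-2$, which for $k=3$ amounts to the single requirement $\ell_1=0$; and this holds automatically, since the $L_\infty$ structure transferred onto $H=H(L)$ by any homotopy retract is minimal by the Homotopy Transfer Theorem (Theorem \ref{HTT}). Indeed, formula (\ref{formula}) only ever sums over directed planar binary rooted trees with at least one internal vertex, so $\ell_1$ never appears; equivalently, $Q^{(1)}=q$ and $I^{(1)}=i$ are already quasi-isomorphisms of complexes with zero differential on $H$. Hence the hypothesis of Theorem \ref{elsegundo} is vacuously satisfied, and since $[x_1,x_2,x_3]_W\neq\emptyset$ its conclusion yields $\epsilon\,\ell_3(x_1,x_2,x_3)\in[x_1,x_2,x_3]_W$ for every homotopy retract. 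The remark following Theorem \ref{elsegundo}, together with (ii) of Remark \ref{remark}, already records that this conclusion does not depend on the chosen retract.

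If one prefers a self-contained argument, the proof of Theorem \ref{elsegundo} specializes transparently. Starting from a DGL morphism $\psi\colon(\lib(U),\partial)\to L$ realizing some class of $[x_1,x_2,x_3]_W$ as in diagram (\ref{Whiteheadlie}), and from the decomposition $L=A\oplus\partial A\oplus C$ giving rise to the chosen arbitrary retract, one sets $\phi(u_i)=\psi(u_i)$ for $i=1,2,3$ and $\phi(u_{i_1i_2})=K\partial\psi(u_{i_1i_2})$ for $1\le i_1<i_2\le 3$. Then $\phi(V)\subset A$, so the retract is adapted to the class $x=\overline{\phi(\omega)}$, and $\phi$ commutes with the differential because of the universal identity $\partial K\partial=\partial$, exactly as in the $p=3$ step of that proof. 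Theorem \ref{main1} then applies verbatim and gives $\epsilon\,\ell_3(x_1,x_2,x_3)=x\in[x_1,x_2,x_3]_W$.

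I expect no genuine obstacle here: the delicate point in the general proof of Theorem \ref{elsegundo} — the inductive extension of $\phi$ past length $k-1$, which forces one to first establish the vanishing (\ref{masole}) of $\ell_{k-1}$ on every $(k-1)$-fold sub-bracket in order to keep lifting — is simply absent when $k=3$, since $V_3=V$ and the construction of $\phi$ terminates after a single stage. Thus the only thing to check is the elementary bookkeeping that $\phi$ as defined is a well-defined DGL morphism adapted to $x$, which is immediate from $\partial K\partial=\partial$.
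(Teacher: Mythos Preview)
Your proposal is correct and matches the paper's approach: the corollary is stated with no proof beyond a $\square$, since it is the immediate specialization of Theorem \ref{elsegundo} to $k=3$, where the hypothesis $\ell_i=0$ for $i\le k-2$ reduces to $\ell_1=0$, which holds automatically for the transferred minimal $L_\infty$ structure on $H$. Your second and third paragraphs are a faithful unpacking of the $p=3$ step in the proof of Theorem \ref{elsegundo} and are also correct, though more than the paper chooses to say.
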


\begin{corollary}\label{caso4} Let $L$ be a DGL such that $H$ is  abelian. If $[x_1,x_2,x_3, x_4]_W\not= \emptyset$, then, for any homotopy retract,
$$
\epsilon\,\ell_4(x_1, x_2, x_3,  x_4)\in [x_1,x_2,x_3, x_4]_W.
$$
\hfill$\square$
\end{corollary}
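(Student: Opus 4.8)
The plan is to deduce Corollary \ref{caso4} as an immediate specialization of Theorem \ref{elsegundo} with $k=4$, so the only real task is to verify that its hypothesis ``$\ell_i=0$ for $i\le k-2$'' holds here, i.e. that $\ell_1=\ell_2=0$. First, since the $L_\infty$ structure transferred via any homotopy retract is minimal by the Homotopy Transfer Theorem (Theorem \ref{HTT}), we have $\ell_1=0$ automatically. Second, I would recall that the transferred binary bracket $\ell_2$ on $H=H(L)$ is precisely the Lie bracket induced on homology by the bracket of $L$: in formula (\ref{formula}) the index set $\mathscr{T}_2$ consists of a single tree, the one with two leaves, one internal vertex and no internal edge, whose contribution is $q\circ[\,,\,]\circ(i\otimes i)$ up to the automorphism factor, and this is exactly the homology bracket $[\overline a,\overline b]=\overline{[a,b]}$. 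Hence the assumption that $H$ is abelian says exactly that $\ell_2=0$.

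With $\ell_1=\ell_2=0$ we have $\ell_i=0$ for all $i\le k-2$ with $k=4\ge 3$, so Theorem \ref{elsegundo} applies verbatim and gives $\epsilon\,\ell_4(x_1,x_2,x_3,x_4)\in[x_1,x_2,x_3,x_4]_W$ whenever this set is non-empty. Finally, by the observation following Theorem \ref{elsegundo} (which rests on (ii) of Remark \ref{remark}), the vanishing of $\ell_1$ and $\ell_2$ is a property of the isomorphism class of the minimal $L_\infty$ structure on $H$ and is independent of the chosen homotopy retract; hence the conclusion holds for \emph{any} homotopy retract of $L$, as stated. Equivalently, one may phrase the whole reduction on the DGC side: $H$ abelian forces the coderivation $\delta_2$ of $(\Lambda^+sH,\delta)$ to vanish and minimality forces $\delta_1=0$, which is exactly the input the word-length argument in the proof of Theorem \ref{elsegundo} needs when $k=4$.

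Since this is a direct specialization, I do not expect any genuine obstacle. The single point that deserves a moment's care is the identification of $\ell_2$ with the homology bracket (equivalently, of $\delta_2$ with $\delta$ restricted to quadratic length acting on cycles), but this is standard and requires no computation beyond inspecting the one tree in $\mathscr{T}_2$.
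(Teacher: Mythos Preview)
Your argument is correct and is precisely the intended one: the paper states Corollary~\ref{caso4} with only a $\square$, i.e.\ as an immediate consequence of Theorem~\ref{elsegundo} for $k=4$, and your only task was to note that minimality gives $\ell_1=0$ while $H$ abelian gives $\ell_2=0$ (since $\ell_2$ is the induced homology bracket). Your remark that the conclusion is then independent of the retract, via Remark~\ref{remark}(ii), is also exactly in line with the paper.
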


We finish with an example which shows that Theorem \ref{elsegundo} is the most general version of its Eckmann-Hilton dual, even for reduced DGL's or equivalently, for simply connected rational complexes.

\begin{example}\label{contraejemplo}
Consider the following DGL,
$$(L,\partial)=(\mathbb{L}(v_1,v_2,v_3,v_4, v_{12}, v_{13}, v_{14}, v_{23}, v_{24}, v_{34}, z, w_{123}, w_{124},v_{134}, v_{234}), \partial ),$$

where $|v_i|=2$, $1\leq i\leq 4$, $|z|=5$ and the differential is given by:
\begin{align*}
\partial (v_i)&=0,\quad 1\leq i\leq 4;\\
\partial (v_{ij})&=[v_i, v_j],\quad 1\leq i<j \leq 4;\\
\partial (z)&=0;\\
\partial(v_{ijk})&=[v_i, v_{jk}]-[v_{ij}, v_k]-[v_j, v_{ik}];\\
\partial(w_{ijk})&=[v_i, v_{jk}]-[v_{ij}+z, v_k]-[v_j, v_{ik}].\\
\end{align*}
\noindent The realization of this DGL is (of the rational homotopy type of) the complex $X$ obtained by removing two  $9$-cells from the space $T(S^3,S^3, S^3, S^3)\vee S^6$ and attach them again in a twisted way using the sphere $S^6$.

We claim that $[\overline{v}_1,\overline{v}_2,\overline{v}_3,\overline{v}_4]_W$ is non empty  and that, for any homotopy retract of $L$,
$$\ell_4(\overline{v}_1,\overline{v}_2,\overline{v}_3,\overline{v}_4)\notin [\overline{v}_1,\overline{v}_2,\overline{v}_3,\overline{v}_4]_W.$$

\medskip

For it,  define a DGL morphism  $\phi $ which solves the extension problem
$$
\xymatrix{
(\mathbb{L}(u_1,u_2,u_3, u_4),0)\ar@{^{(}->}[d]\ar[r]^(.73)\varphi&L\\
(\lib(U),\partial)\ar@{-->}[ru]_(.54)\phi
}
$$
as follows:
$$\phi(u_1)=v_1;\ \phi(u_2)=v_2;\  \phi(u_3)=v_3;\ \phi(u_4)=v_4;$$
$$\phi (u_{12})=v_{12}+z;$$
$$\phi (u_{13})=v_{13};\  \phi (u_{14})=v_{14};\
\phi (u_{23})=v_{23};\
\phi (u_{24})=v_{24};\
\phi (u_{34})=v_{34}$$
$$\phi (u_{123})=w_{123};\ \phi (u_{124})=w_{124};$$
$$\phi (u_{134})=v_{134};\ \phi (u_{234})=v_{234}.$$
Then, $[\overline{v}_1,\overline{v}_2,\overline{v}_3,\overline{v}_4]_W\not=\emptyset$. More precisely, the morphism $\phi $ defines the non zero $4$th order Whitehead bracket
$\overline{\phi(\omega)}=\overline{\Phi}$ where
\begin{align*}
\Phi&=[w_{123}, v_4]-[w_{124}, v_3]+[v_{12}, v_{34}]+[z, v_{34}]\\
&+[v_{14}, v_{23}]+[v_1, v_{234}]-[v_{13}, v_{24}]+[v_{134}, v_2].
\end{align*}
Note that $H$ is not an abelian Lie algebra and that, for any decomposition $A\oplus \partial A\oplus C$ giving rise to any chosen homotopy retract, the element $\phi (u_{12})=v_{12}+z\notin A$ as $z$ represents a non zero class. Hence, theorems \ref{main1} and \ref{elsegundo} do not apply.

In fact, it is straightforward to check that $\overline\Phi$ generates $H_{10}(L)$ and it is the only element in $[\overline{v}_1,\overline{v}_2,\overline{v}_3,\overline{v}_4]_W$. Moreover, for any homotopy retract, $\ell_4(\overline{v}_1,\overline{v}_2,\overline{v}_3,\overline{v}_4)\not= \overline\Phi $.

To help the reader with the computations, we make explicit a particular decomposition of $L$ as $A\oplus \partial A\oplus C$ with $\partial\colon A\stackrel{\cong}{\to} \partial A$ and $C\cong H$ up to degree $10$. Here, the first column denotes degree and the twisted arrow $\Rsh$ indicates the action of $\partial$ in the corresponding set.

{\footnotesize $$\begin{array}{llll}
&\hskip .8cm A&\hskip2cm \partial A&\hskip .8cm C\\
2&&&v_1,v_2,v_3,v_4\\
3&&&\\
4&&[v_1,v_2],[v_1,v_3],[v_1,v_4],&\\
&\hskip 1cm \Rsh &[v_2,v_3],[v_2,v_4], [v_3,v_4]&\\
5&v_{12},v_{13},v_{14},&&z\\
&v_{23},v_{24},v_{34}&&\\
6&&\Bigl[ v_1,[v_1,v_2]\Bigr],\Bigl[ v_1,[v_1,v_3]\Bigr],\Bigl[ v_1,[v_1,v_4]\Bigr],&\\
&&\Bigl[ v_2,[v_2,v_1]\Bigr],\Bigl[ v_2,[v_2,v_3]\Bigr],\Bigl[ v_2,[v_2,v_4]\Bigr],&\\
&&\Bigl[ v_3,[v_3,v_1]\Bigr],\Bigl[ v_3,[v_3,v_2]\Bigr],\Bigl[ v_3,[v_3,v_4]\Bigr],&\\
&&\Bigl[ v_4,[v_4,v_1]\Bigr],\Bigl[ v_4,[v_4,v_2]\Bigr],\Bigl[ v_4,[v_4,v_3]\Bigr],&\\
&&\Bigl[ v_1,[v_2,v_3]\Bigr],\Bigl[ v_2,[v_1,v_3]\Bigr],&\\
&&\Bigl[ v_1,[v_2,v_4]\Bigr],\Bigl[ v_2,[v_1,v_4]\Bigr],&\\
&&\Bigl[ v_1,[v_3,v_4]\Bigr],\Bigl[ v_3,[v_1,v_4]\Bigr],&\\
&\hfill \Rsh &\Bigl[ v_2,[v_3,v_4]\Bigr],\Bigl[ v_3,[v_2,v_4]\Bigr]&\\
7&[v_1,v_{12}],[v_1,v_{13}], [v_1,v_{14}],&&[z, v_1],[z, v_2],\\
&[v_2,v_{12}],[v_2,v_{23}], [v_2,v_{24}],&&[z, v_3],[z, v_4]\\
&[v_3,v_{13}],[v_3,v_{23}], [v_3,v_{34}],&&\\
&[v_4,v_{14}],[v_4,v_{24}], [v_4,v_{34}],&&\\
&[v_1, v_{23}], [v_2, v_{13}]&-[v_{12}, v_3]+[v_1, v_{23}]-[v_2, v_{13}]-[z,v_3]&\\
&[v_1, v_{24}], [v_2, v_{14}]&-[v_{12}, v_4]+[v_1, v_{24}]-[v_2, v_{14}]-[z,v_4]&\\
&[v_1, v_{34}], [v_3, v_{14}]&-[v_{13}, v_4]+[v_1, v_{34}]-[v_3, v_{14}]&\\
&[v_2, v_{34}], [v_3, v_{24}]&-[v_{23}, v_4]+[v_2, v_{34}]-[v_3, v_{24}]&\\
&\hskip 2.5cm \Rsh &&\\
8&w_{123}, w_{124}, v_{134}, v_{234}&\Bigl[ v_1,\Bigl[v_1,[v_1,v_2]\Bigr]\Bigr],\Bigl[ v_1,\Bigl[v_1,[v_1,v_3]\Bigr]\Bigr],\dots & \\
9& \dots & \dots &\dots\\
10& \dots & \dots &\Phi
\end{array}$$}

\end{example}

\begin{remark}  Let $X$ be a $1$-connected CW-complex of finite type and let $L$ be a reduced, finite type DGL model of $X$. If $(\Lambda sH,\delta)$ is the DGC  equivalent to a transferred $L_\infty$ structure on $H$, then, its dual $(\Lambda sH,\delta)^\sharp$ is isomorphic \cite[\S23]{FHT}  to $(\Lambda (sH)^\sharp,d)$ which is the Sullivan minimal model of $X$. In this case, Proposition \ref{elprime} and theorems \ref{main1} and \ref{elsegundo} are in some sense the reciprocal of Theorem 5.4 in \cite{anar}.

\end{remark}

\bigskip

\noindent\sc{Francisco Belch{\'\i}\\
\noindent\sc{Department of Mathematics\\
University of Southampton,  Salisbury Rd, Southampton SO17 1BJ, UK.}\\

\noindent\tt{frbegu@gmail.com}
\\

\noindent\sc{Urtzi Buijs, Jos\'e M. Moreno-Fern\'andez and Aniceto Murillo\\
\noindent\sc{Departamento de \'Algebra, Geometr{\'\i}a y Topolog{\'\i}a,\\
Universidad de M\'alaga, Ap. 59, 29080 M\'alaga, Spain.}\\

\noindent\tt{ubuijs@uma.es}
\\
\noindent\tt{morenofdezjm@gmail.com}
\\
\noindent\tt{aniceto@uma.es}

\end{document}